\title{Simplification for Graph-like Objects}
\author{Will Grilliette}
\address{National Security Agency, Fort George G Meade MD, MD 20755-6844, USA}
\newtheorem{thm}{Theorem}[section]
\newtheorem{prop}[thm]{Proposition}
\newtheorem{cor}[thm]{Corollary}
\newtheorem{lem}[thm]{Lemma}
\theoremstyle{definition}
\newtheorem{defn}[thm]{Definition}
\theoremstyle{remark}
\newcommand{\set}{\mathbf{Set}}
\newcommand{\ssys}{\mathbf{SSys}}
\newcommand{\digra}{\mathbf{Digra}}
\newcommand{\istr}{\mathbf{IStr}}
\newcommand{\cat}[1]{\mathfrak{#1}}
\newcommand{\str}{\mathbf{Str}}
\newcommand{\spa}{\mathbf{Spa}}
\DeclareMathOperator{\ob}{Ob}
\DeclareMathOperator{\Space}{Sp}
\DeclareMathOperator{\comma}{Com}
\DeclareMathOperator{\Ran}{ran}
\begin{document}

\begin{abstract}
The simplification of a multigraph into a simple graph can be abstracted to a more general comma category under some common conditions.
When using the identity functor,
the category of simple objects in a comma category generalizes the functor-structured category.
Seated in categorical terms,
simplification can be dualized to ``antisimplification'',
which manifests as removal of isolated vertices and loose edges.
\end{abstract}

\maketitle

\section{Introduction}

In graph theory,
the process of converting a multigraph to a simple graph is mundane to the point of becoming virtually invisible in practice.
Due to this ubiquity,
one might not be surprised that
the simplification operation arises naturally as a left adjoint to the natural inclusion of simple graphs into the larger category of multigraphs.
Consequently,
the category of simple graphs inherits much of its parent category's structure.
Please note that this paper allows loops in simple graphs.

However,
in abstracting the simplification of a multigraph,
more connections become apparent.
The notions of ``complete graph'',
``simple graph'',
and ``simplification'' can be generalized within a comma category under some common conditions.
Explicitly,
following \cite{grilliette2023},
the codomain functor of the comma category must admit a right adjoint,
which serves as the analogue of the complete graph.
Then,
a ``simple object'' is a subobject of an image of this adjoint.
Simplification can,
therefore,
be represented as an image factorization of the unit of the adjunction.

When using the category of sets and its identity functor,
these conditions are invariably satisfied and produce the categories of simple digraphs and hypergraphs,
which are extant in the literature \cite{hajiabolhassan2016,hammack2016,hell1979-2}.
This special case of using the identity functor is not unique to the category of sets.
Indeed,
constructing a comma category with the identity functor and considering the category of simple objects creates a generalization of the functor-structured category \cite{adamek1980,kucera1972,solovyov2013}.

Yet,
with simplification seated in terms of a right adjoint and an image factorization,
a dual process of ``antisimplification'' can be considered.
An ``anticomplete'' object would be the image under a left adjoint to the domain functor,
and an ``antisimple'' object would be a quotient of an anticomplete object.
Then,
``antisimplification'' would result from a coimage factorization of the counit of the adjunction.

For directed multigraphs,
``anticomplete'' corresponds to a quiver composed of disjoint copies of the path of length 1,
and ``antisimple'' would be quotients of such,
which exclude isolated vertices.
Hence,
``antisimplification'' results in deletion of isolated vertices.
In this categorical sense,
simplification of a multigraph is dual to deletion of isolated vertices.
The ability to dualize graph-theoretic concepts in this way illustrates that category theory might provide novel perspectives on existing notions,
as well as new avenues of investigation.

Section \ref{simple} introduces the notions of ``complete'' and ``simple'' objects in a comma category,
before proving some general features of each.
The simplification is then defined on a comma category and characterized as a left adjoint to the natural inclusion.

Section \ref{structured} applies the situation of Section \ref{simple} to when one of the functors is the identity.
In this case,
a generalization of the functor-structured category manifests and behaves quite similarly to the category of simple graphs.

Section \ref{examples1} considers concrete examples of the framework introduced in Sections \ref{simple} and \ref{structured}.
This general framework is shown to capture both the transition of a set-system hypergraph to a simple set system and the transition of a quiver to a simple digraph.
In the case of incidence hypergraphs,
the simplification process collapses parallel incidences,
rather than parallel edges.
Lastly,
the simplification can be applied to the slice category of a regular category over a fixed object,
producing the ordered class of subobjects of said object.
The final example exemplifies how the image factorization appears to be key to this process.

Section \ref{examples2} demonstrates examples of antisimplification.
Applied to quivers,
antisimplification removes isolated vertices.
On the other hand,
antisimplification of an incidence hypergraph removes both isolated vertices and loose edges.
These two examples illustrate the connection between vertices of a quiver and an incidence hypergraph's vertices and edges found in the composition functor from \cite[Theorem 3.4.7]{grilliette2023}.

\section{Simplification}\label{simple}

Throughout this section,
let $\xymatrix{\cat{A}\ar[r]^{F} & \cat{C} & \cat{B}\ar[l]_{G}}$ be functors,
$\cat{G}:=\left(F\downarrow G\right)$ be the comma category,
$\xymatrix{\cat{A} & \cat{G}\ar[l]_{P}\ar[r]^{Q} & \cat{B}}$ be the canonical projections,
and $\xymatrix{FP\ar[r]^{f} & GQ}$ be the canonical natural transformation.
Note that $P$, $Q$, and $f$ generalize the edge set, vertex set, and incidence function of a graph, respectively \cite[p.\ 6-7]{grilliette2023}.

Recall that a simple graph is explicitly a subgraph of a complete graph \cite[p.\ 316]{kung1998}.
In \cite[p.\ 5-7]{grilliette2023},
the complete graph corresponded to a right adjoint to the vertex functor.
As such,
this section assumes that $Q$ admits a right adjoint $Q^{\star}$ with unit $\xymatrix{id_{\cat{G}}\ar[r]^{\eta} & Q^{\star}Q}$ and counit $\xymatrix{QQ^{\star}\ar[r]^{\theta} & id_{\cat{B}}}$,
and defines a ``complete object'' and a ``simple object'' in a comma category.

\begin{defn}[Generalized complete \& simple]
Let $H\in\ob(\cat{G})$.
\begin{enumerate}
\item $H$ is \emph{complete} if there is $B\in\ob(\cat{B})$ such that $H\cong Q^{\star}(B)$.
\item $H$ is \emph{simple} if there is a monomorphism $\xymatrix{H\textrm{ }\ar@{>->}[r]^(0.4){m} & Q^{\star}(B)}\in\cat{G}$ for some $B\in\ob(\cat{B})$.
\end{enumerate}
Let $\cat{SG}$ be the full subcategory of $\cat{G}$,
consisting of all simple objects in $\cat{G}$,
with inclusion functor $\xymatrix{\cat{SG}\ar[r]^{N} & \cat{G}}$.
A routine check shows that $\cat{SG}$ is replete in $\cat{G}$.
\end{defn}

Since the unit $\eta$ of the adjunction maps any object of $\cat{G}$ to a complete object,
it can be used to detect both completeness and simplicity.

\begin{prop}[Simple \& unit map]\label{simple:unit}
Let $H\in\ob(\cat{G})$.
\begin{enumerate}

\item\label{simple0} If $\eta_{H}$ is an isomorphism, then $H$ is complete.

\item If $Q^{\star}$ is full, then $H$ is complete if and only if $\eta_{H}$ is an isomorphism.

\item\label{simple1} $H$ is simple if and only if $\eta_{H}$ is monic.

\item\label{simple2} If $P\left(\eta_{H}\right)$ is monic,
then $H$ is simple.

\item If $P$ preserves monomorphisms,
then $H$ is simple if and only if $P\left(\eta_{H}\right)$ is monic.

\end{enumerate}
\end{prop}

\begin{proof}

Let $H\in\ob(\cat{G})$.
\begin{enumerate}

\item If $\eta_{H}$ is an isomorphism,
then $H\cong Q^{\star}Q(H)$.

\item $(\Leftarrow)$ If $\eta_{H}$ is an isomorphism,
then $H$ is complete by part \ref{simple0}.\\
$(\Rightarrow)$ Let $\xymatrix{H\ar[r]^(0.4){\alpha} & Q^{\star}(B)}\in\cat{G}$ be an isomorphism for some $B\in\ob(\cat{B})$.
As $Q\dashv Q^{\star}$,
$Q^{\star}\left(\theta_{B}\right)\circ\eta_{Q^{\star}(B)}=id_{Q^{\star}(B)}$,
so
\[
\left(\alpha^{-1}\circ Q^{\star}\left(\theta_{B}\right)\circ Q^{\star}Q(\alpha)\right)\circ\eta_{H}
=\alpha^{-1}\circ Q^{\star}\left(\theta_{B}\right)\circ\eta_{Q^{\star}(B)}\circ\alpha
=\alpha^{-1}\circ\alpha\\
=id_{H}.
\]
Since $Q^{\star}$ is full,
$\theta_{B}$ is a section,
meaning there is $\xymatrix{B\ar[r]^(0.4){\beta} & QQ^{\star}(B)}\in\cat{B}$ such that $\beta\circ\theta_{B}=id_{QQ^{\star}(B)}$.
Then,
\[
Q^{\star}(\beta)
=Q^{\star}(\beta)\circ Q^{\star}\left(\theta_{B}\right)\circ\eta_{Q^{\star}(B)}
=\eta_{Q^{\star}(B)},
\]
guaranteeing
\[\begin{array}{rcl}
\eta_{H}\circ\left(\alpha^{-1}\circ Q^{\star}\left(\theta_{B}\right)\circ Q^{\star}Q(\alpha)\right)
&   =   &   Q^{\star}Q\left(\alpha^{-1}\right)\circ\eta_{Q^{\star}(B)}\circ Q^{\star}\left(\theta_{B}\right)\circ Q^{\star}Q(\alpha)\\
&   =   &   Q^{\star}Q\left(\alpha^{-1}\right)\circ Q^{\star}Q(\alpha)
=id_{Q^{\star}Q(H)}.
\end{array}\]
Thus,
$\eta_{H}$ is an isomorphism.

\item $(\Leftarrow)$ If $\xymatrix{H\ar[r]^(0.4){\eta_{H}} & Q^{\star}Q(H)}\in\cat{G}$ is monic,
then $H$ is simple.\\
$(\Rightarrow)$ If $H$ is simple,
there is a monomorphism $\xymatrix{H\textrm{ }\ar@{>->}[r]^(0.4){m} & Q^{\star}(B)}\in\cat{G}$ for some $B\in\ob(\cat{B})$.
There is a unique $\xymatrix{Q(H)\ar[r]^(0.6){\hat{m}} & B}\in\cat{B}$ such that $Q^{\star}\left(\hat{m}\right)\circ\eta_{H}=m$.
Therefore,
$\eta_{H}$ is monic.

\item Observe that $\theta_{Q(H)}\circ Q\left(\eta_{H}\right)=id_{Q(H)}$ as $Q\dashv Q^{\star}$.
Thus,
$Q\left(\eta_{H}\right)$ is a section.
As both $Q\left(\eta_{H}\right)$ and $P\left(\eta_{H}\right)$ are monic,
$\eta_{H}$ is monic,
showing $H$ is simple by part \ref{simple1}.

\item $(\Leftarrow)$ If $P\left(\eta_{H}\right)$ is monic, then $H$ is simple by part \ref{simple2}.\\
$(\Rightarrow)$ If $H$ is simple,
then $\eta_{H}$ is monic by part \ref{simple1}.
As $P$ preserves monomorphisms,
$P\left(\eta_{H}\right)$ is monic.

\end{enumerate}

\end{proof}

Homomorphisms of simple graphs are often regarded as only functions between the vertex sets \cite[p.\ 53]{hell1979},
rather than a pair of functions \cite[p.\ 186]{dorfler1980},
due to the edge map being uniquely determined by the vertex map.
The same behavior can be observed in $\cat{SG}$,
due to the universal property of $Q^{\star}$.

\begin{prop}[Maps into simple]\label{simple:morphisms}
If $H$ is simple and $\xymatrix{K\ar@/^/[r]^{\phi}\ar@/_/[r]_{\psi} & H}\in\cat{G}$,
then $\phi=\psi$ if and only if $Q(\phi)=Q(\psi)$.
Thus,
morphisms in $\cat{SG}$ are uniquely determined by their $\cat{B}$-coordinate.
\end{prop}

\begin{proof}

$(\Rightarrow)$ This case is true in general.\\
$(\Leftarrow)$ Notice that
\[\begin{array}{rcl}
\theta_{Q(H)}\circ Q\left(\eta_{H}\circ\phi\right)
=Q\left(\phi\right)
=Q\left(\psi\right)
=\theta_{Q(H)}\circ Q\left(\eta_{H}\circ\psi\right).
\end{array}\]
By the universal property of $Q^{\star}Q(H)$,
$\eta_{H}\circ\phi=\eta_{H}\circ\psi$.
By Proposition \ref{simple:unit},
$\eta_{H}$ is monic,
so $\phi=\psi$.

\end{proof}

Simplifying a graph can be achieved through a quotient process \cite[p.\ 7]{bumby1984},
though this process is tacitly the image factorization of the incidence map.
Analogously,
if $\eta$ has an image factorization,
an object in $\cat{G}$ can be quotiented to a simple object.
Doing so for every object reveals $\cat{SG}$ to be a reflective subcategory of $\cat{G}$.

\begin{defn}[Simplification]
For $H\in\ob(\cat{G})$,
say $\eta_{H}$ admits a regular epi-mono factorization $\xymatrix{H\ar@{->>}[r]^{e_{H}} & S(H)\textrm{ }\ar@{>->}[r]^{m_{H}} & Q^{\star}Q(H)}\in\cat{G}$.
Observe that $S(H)\in\ob(\cat{SG})$,
and $\xymatrix{H\ar[r]^{e_{H}} & NS(H)}\in\cat{G}$
\end{defn}

\begin{thm}[Universal property]
For $H\in\ob(\cat{G})$,
let $\eta_{H}=m_{H}\circ e_{H}$ be a regular epi-mono factorization.
If $\xymatrix{H\ar[r]^(0.4){\phi} & N\left(H'\right)}\in\cat{G}$,
then there is a unique $\xymatrix{S(H)\ar[r]^(0.6){\hat{\phi}} & H'}\in\cat{SG}$ such that $N\left(\hat{\phi}\right)\circ e_{H}=\phi$.
\end{thm}

\begin{proof}

Let $\xymatrix{J\ar@/^/[r]^{p_{1}}\ar@/_/[r]_{p_{2}} & H}\in\cat{G}$ satisfy that $\xymatrix{H\ar@{->>}[r]^{e_{H}} & S(H)}$ is a coequalizer of $p_{1}$ and $p_{2}$.
By Proposition \ref{simple:unit},
$\eta_{H'}$ is monic.
Consider the diagram below.
\[\xymatrix{
J
\ar@/^/[r]^{p_{1}}
\ar@/_/[r]_{p_{2}}
&
H
\ar[rr]^{\phi}
\ar[d]_{\eta_{H}}
\ar@{}[drr]|-(0.6){=}
&
&
H'
\ar[d]^{\eta_{H'}}\\
&
Q^{\star}Q(H)\ar[rr]_{Q^{\star}Q(\phi)}
&
&
Q^{\star}Q\left(H'\right)
}\]
Then,
\[\begin{array}{rcl}
\eta_{H'}\circ\phi\circ p_{1}
&   =   &   Q^{\star}Q(\phi)\circ\eta_{H}\circ p_{1}\\
&   =   &   Q^{\star}Q(\phi)\circ m_{H}\circ e_{H}\circ p_{1}\\
&   =   &   Q^{\star}Q(\phi)\circ m_{H}\circ e_{H}\circ p_{2}\\
&   =   &   Q^{\star}Q(\phi)\circ\eta_{H}\circ p_{2}\\
&   =   &   \eta_{H'}\circ\phi\circ p_{2}.
\end{array}\]
As $\eta_{H'}$ is monic,
$\phi\circ p_{1}=\phi\circ p_{2}$.
Since $\xymatrix{H\ar@{->>}[r]^{e_{H}} & S(H)}$ is a coequalizer of $p_{1}$ and $p_{2}$,
there is a unique $\xymatrix{S(H)\ar[r]^(0.6){\hat{\phi}} & H'}\in\cat{G}$ such that $\hat{\phi}\circ e_{H}=\phi$.
Because $\cat{SG}$ is full in $\cat{G}$,
$\xymatrix{S(H)\ar[r]^(0.6){\hat{\phi}} & H'}\in\cat{SG}$ and $N\left(\hat{\phi}\right)\circ e_{H}=\phi$.

\end{proof}

\begin{cor}[Reflective subcategory]\label{simple:reflective}
Assume that $Q$ admits a right adjoint $Q^{\star}$ with unit $\xymatrix{id_{\cat{G}}\ar[r]^{\eta} & Q^{\star}Q}$,
and that $\eta_{H}$ admits a regular epi-mono factorization for all $H\in\ob(\cat{G})$.
Then,
$\cat{SG}$ is a reflective subcategory of $\cat{G}$.
\end{cor}

\section{Subobject-structured Categories}\label{structured}

For a functor $\xymatrix{\cat{B}\ar[r]^{G} & \cat{C}}$,
let $\cat{G}:=\left(id_{\cat{C}}\downarrow G\right)$,
$\xymatrix{\cat{C} & \cat{G}\ar[l]_{P}\ar[r]^{Q} & \cat{B}}$ be the canonical projections,
$\xymatrix{P\ar[r]^{f} & GQ}$ be the canonical natural transformation,
and $\cat{SG}$ be the full subcategory of simple objects in $\cat{G}$.
By \cite[Proposition 1.1.2]{grilliette2023},
$Q$ admits a right adjoint $Q^{\star}$ given on objects by
\[
Q^{\star}(B)=\left(G(B),id_{G(B)},B\right)
\]
with counit $\xymatrix{QQ^{\star}\ar[r]^{\theta} & id_{\cat{B}}}$ determined by $\theta_{B}=id_{B}$.
In this case,
the unit of the adjunction becomes intimately connected with $f$.

\begin{prop}[Unit map]
The unit $\xymatrix{id_{\cat{G}}\ar[r]^{\eta} & Q^{\star}Q}$ is given by
\[
\eta_{H}=\left(f_{H},id_{Q(H)}\right).
\]
\end{prop}

\begin{proof}

As $Q\dashv Q^{\star}$, one has
\[
Q\left(\eta_{H}\right)
=id_{Q(H)}\circ Q\left(\eta_{H}\right)
=\theta_{Q(H)}\circ Q\left(\eta_{H}\right)
=id_{Q(H)}.
\]
Since $\xymatrix{H\ar[r]^(0.4){\eta_{H}} & Q^{\star}Q(H)}\in\cat{G}$,
the following calculation follows:
\[
P\left(\eta_{H}\right)
=id_{GQ(H)}\circ P\left(\eta_{H}\right)
=f_{Q^{\star}Q(H)}\circ P\left(\eta_{H}\right)
=Q\left(\eta_{H}\right)\circ f_{H}
=id_{Q(H)}\circ f_{H}
=f_{H}.
\]

\end{proof}

Recall that a graph is \emph{simple} if it has no parallel edges \cite[p.\ 2-3, 31-32]{bondy-murty},
i.e.\ if the incidence function is one-to-one, or monic.
The proposition above demonstrates that a monic condition on $f$ implies simplicity,
but the two conditions actually coincide by exploiting the structure of $\eta$.

\begin{cor}[Simple \& incidence]\label{structured:unit}
An object $H\in\ob(\cat{G})$ is simple if and only if $f_{H}$ is monic.\end{cor}

\begin{proof}

$(\Leftarrow)$ If $f_{H}=P\left(\eta_{H}\right)$ is monic, then $H$ is simple by Proposition \ref{simple:unit}.\\
$(\Rightarrow)$ By Proposition \ref{simple:unit},
$\eta_{H}$ is monic.
Say $\xymatrix{C\ar@/^/[r]^(0.4){c_{1}}\ar@/_/[r]_(0.4){c_{2}} & P(H)}\in\cat{C}$ satisfy that $f_{H}\circ c_{1}=f_{H}\circ c_{2}$.
Define $h:=f_{H}\circ c_{1}$,
$H':=\left(C,h,Q(H)\right)\in\ob(\cat{G})$,
and $\phi_{k}:=\left(c_{k},id_{Q(H)}\right)$ for $k=1,2$.
Notice that $f_{H}\circ c_{k}=h\circ G\left(id_{Q(H)}\right)$,
meaning $\xymatrix{H'\ar@/^/[r]^{\phi_{1}}\ar@/_/[r]_{\phi_{2}} & H}\in\cat{G}$ for $k=1,2$.
Moreover,
\[
\eta_{H}\circ\phi_{1}
=\left(f_{H}\circ c_{1},id_{Q(H)}\circ id_{Q(H)}\right)
=\left(f_{H}\circ c_{2},id_{Q(H)}\circ id_{Q(H)}\right)
=\eta_{H}\circ\phi_{2}.
\]
As $\eta_{H}$ is monic,
$\phi_{1}=\phi_{2}$,
meaning $c_{1}=P\left(\phi_{1}\right)=P\left(\phi_{2}\right)=c_{2}$.
Therefore,
$f_{H}$ is monic.

\end{proof}

The connection with monomorphisms in $\cat{C}$ motivates a generalization of the functor-structured category \cite{adamek1980,kucera1972,solovyov2013}.

\begin{defn}[Subobject-structured]
A \emph{$G$-structured object} is a pair $(B,U)$,
where $B\in\ob(\cat{B})$ and $U$ is a subobject of $G(B)$.
For $G$-structured objects $\left(B_{k},U_{k}\right)$ for $k=1,2$,
a \emph{$G$-structured morphism} from $\left(B_{1},U_{1}\right)$ to $\left(B_{2},U_{2}\right)$ is a homomorphism $\xymatrix{B_{1}\ar[r]^{b} & B_{2}}\in\cat{B}$ such that there are
\[
\xymatrix{
G\left(B_{1}\right)    &   C_{1}\ar[r]^{c}\ar[l]_(0.4){m_{1}}   &   C_{2}\ar[r]^(0.4){m_{2}}  &   G\left(B_{2}\right)
}\in\cat{C}
\]
such that $\left(C_{k},m_{k}\right)$ represents the subobject $U_{k}$ for $k=1,2$
and $m_{2}\circ c=G(b)\circ m_{1}$.
\end{defn}

While the definition above merely asserts the existence of a trio of morphisms supporting $b$,
they are uniquely determined up to choice of the monomorphisms representing the subobjects.

\begin{prop}[Essential uniqueness]\label{structured:uniqueness}
Say $\xymatrix{B_{1}\ar[r]^{b} & B_{2}}\in\cat{B}$ is a $G$-structured morphism from $\left(B_{1},U_{1}\right)$ to $\left(B_{2},U_{2}\right)$.
If $\left(C_{k},m_{k}\right)$ represents the subobject $U_{k}$ for $k=1,2$,
then there is a unique $\xymatrix{C_{1}\ar[r]^{c} & C_{2}}\in\cat{C}$ such that $m_{2}\circ c=G(b)\circ m_{1}$.
\end{prop}

\begin{proof}

As $b$ is a $G$-structured morphism,
there are
\[
\xymatrix{
G\left(B_{1}\right)    &   D_{1}\ar[r]^{d}\ar[l]_(0.4){n_{1}}   &   D_{2}\ar[r]^(0.4){n_{2}}  &   G\left(B_{2}\right)
}\in\cat{C}
\]
such that $\left(D_{k},n_{k}\right)$ represents the subobject $U_{k}$ for $k=1,2$
and $n_{2}\circ d=G(b)\circ n_{1}$.
There is an isomorphism $\xymatrix{C_{k}\ar[r]^{\alpha_{k}}_{\cong} & D_{k}}\in\cat{C}$ such that $n_{k}\circ\alpha_{k}=m_{k}$ for $k=1,2$.
Let $c:=\alpha_{2}^{-1}\circ d\circ\alpha_{1}$,
and observe that
\[
m_{2}\circ c
=m_{2}\circ\alpha_{2}^{-1}\circ d\circ\alpha_{1}
=n_{2}\circ d\circ\alpha_{1}
=G(b)\circ n_{1}\circ\alpha_{1}
=G(b)\circ m_{1}.
\]
If $\xymatrix{C_{1}\ar[r]^{c'} & C_{2}}\in\cat{C}$ satisfies that $m_{2}\circ c'=G(b)\circ m_{1}$,
then $m_{2}\circ c'=m_{2}\circ c$.
Since $m_{2}$ is monic,
$c'=c$.

\end{proof}

As one would expect,
$G$-structured objects with $G$-structured morphisms form a category.

\begin{lem}[Composition \& identities]
If $\left(B_{k},U_{k}\right)$ is a $G$-structured object for $k=1,2,3$
and $\xymatrix{B_{k}\ar[r]^{b_{k}} & B_{k+1}}\in\cat{B}$ is a $G$-structured morphism for $k=1,2$,
then $b_{2}\circ b_{1}$ is a $G$-structured morphism from $\left(B_{1},U_{1}\right)$ to $\left(B_{3},U_{3}\right)$.
Moreover,
$id_{B_{1}}$ is a $G$-structured morphism from $\left(B_{1},U_{1}\right)$ to itself.
\end{lem}

\begin{proof}

There are$\xymatrix{G\left(B_{1}\right) & C_{1}\ar[r]^{c}\ar[l]_(0.4){m_{1}} & C_{2}\ar[r]^(0.4){m_{2}} & G\left(B_{2}\right)}\in\cat{C}$
such that $\left(C_{k},m_{k}\right)$ represents the subobject $U_{k}$ for $k=1,2$
and $m_{2}\circ c=G\left(b_{1}\right)\circ m_{1}$.
Likewise,
there are$\xymatrix{G\left(B_{2}\right) & D_{2}\ar[r]^{d}\ar[l]_(0.4){n_{2}} & D_{3}\ar[r]^(0.4){n_{3}} & G\left(B_{3}\right)}\in\cat{C}$
such that $\left(D_{k},n_{k}\right)$ represents the subobject $U_{k}$ for $k=2,3$
and $n_{3}\circ d=G\left(b_{2}\right)\circ n_{2}$.
There is an isomorphism $\xymatrix{C_{2}\ar[r]^{\alpha}_{\cong} & D_{2}}\in\cat{C}$ such that $n_{2}\circ\alpha=m_{2}$.
Then,
\[
n_{3}\circ\left(d\circ\alpha\circ c\right)
=G\left(b_{2}\right)\circ n_{2}\circ\alpha\circ c
=G\left(b_{2}\right)\circ m_{2}\circ c
=G\left(b_{2}\circ b_{1}\right)\circ m_{1}.
\]

Moreover,
$
m_{1}\circ id_{C_{1}}
=m_{1}
=G\left(id_{B_{1}}\right)\circ m_{1}
$.

\end{proof}

\begin{defn}[Category]
Define $\str(G)$ in the following way:
\begin{itemize}

\item $\ob(\str(G))$ is the class of $G$-structured objects;

\item for $(B,U),\left(B',U'\right)\in\ob(\str(G))$,
$\str(G)\left((B,U),\left(B',U'\right)\right)$ is the set of all $G$-structured morphisms from $(B,U)$ to $\left(B',U'\right)$;

\item identities and composition are inherited from $\cat{B}$.

\end{itemize}
By the previous lemma,
$\str(G)$ is a category.
\end{defn}

The content of Corollary \ref{structured:unit} and Proposition \ref{structured:uniqueness} shows that $\cat{SG}$ and $\str(G)$ are equivalent categories.

\begin{defn}[Functors]
Define $\xymatrix{\cat{SG}\ar[r]^(0.4){\Space} & \str(G)}$ in the following way:
\begin{itemize}

\item $\Space(H):=\left(Q(H),U_{H}\right)$,
where $U_{H}$ is the subobject of $GQ(H)$ represented by $\xymatrix{P(H)\textrm{ }\ar@{>->}[r]^{f_{H}} & GQ(H)}\in\cat{C}$;

\item $\Space(\phi):=Q(\phi)$.

\end{itemize}
Define $\xymatrix{\str(G)\ar[r]^{\comma} & \cat{SG}}$ in the following way:\begin{itemize}

\item for each $(B,U)\in\ob(\str(G))$,
choose $\xymatrix{C_{(B,U)}\textrm{ }\ar@{>->}[rr]^{m_{(B,U)}} & & G(B)}\in\cat{C}$ that represents $U$
and define $\comma(B,U):=\left(C_{(B,U)},m_{(B,U)},B\right)$;

\item for $\xymatrix{(B,U)\ar[r]^{b} & \left(B',U'\right)}\in\str(G)$,
Proposition \ref{structured:uniqueness} provides a unique
\[
\xymatrix{
C_{(B,U)}\ar[r]^{c_{b}} & C_{\left(B',U'\right)}
}\in\cat{C}
\]
such that $m_{\left(B',U'\right)}\circ c_{b}=G(b)\circ m_{(B,U)}$,
so define $\comma(b):=\left(c_{b},b\right)$.

\end{itemize}
Routine checks show that $\comma$ and $\Space$ are functors
and $\Space\comma=id_{\str(G)}$.
\end{defn}

\begin{thm}[Equivalence]\label{structured:equivalence}
There is a natural isomorphism $\xi$ from $id_{\cat{SG}}$ to $\comma\Space$.
Consequently,
$\cat{SG}$ is equivalent to $\str(G)$ as categories.
\end{thm}

\begin{proof}

For $H\in\ob(\cat{SG})$,
observe that
\[
\comma\Space(H)
=\comma\left(Q(H),U_{H}\right)
=\left(C_{\left(Q(H),U_{H}\right)},m_{\left(Q(H),U_{H}\right)},Q(H)\right),
\]
where $U_{H}$ is the subobject of $GQ(H)$ represented by $\xymatrix{P(H)\textrm{ }\ar@{>->}[r]^{f_{H}} & GQ(H)}\in\cat{C}$,
and $\xymatrix{C_{\left(Q(H),U_{H}\right)}\ar[rr]^{m_{\left(Q(H),U_{H}\right)}} & & GQ(H)}\in\cat{C}$ represents $U_{H}$.
Then,
there is an isomorphism $\xymatrix{P(H)\ar[r]^{\alpha_{H}} & C_{\left(Q(H),U_{H}\right)}}\in\cat{C}$ such that
\[
m_{\left(Q(H),U_{H}\right)}\circ\alpha_{H}
=f_{H}
=G\left(id_{Q(H)}\right)\circ f_{H}.
\]
Define $\xi_{H}:=\left(\alpha_{H},id_{Q(H)}\right)$
and note that $\xymatrix{H\ar[r]^(0.4){\xi_{H}} & \comma\Space(H)}\in\cat{SG}$ is an isomorphism.
Let $\xymatrix{H\ar[r]^{\phi} & H'}\in\cat{SG}$
and consider the diagram below.
\[\xymatrix{
H\ar[d]_{\phi}\ar[rr]^{\xi_{H}}   &   &   \comma\Space(H)\ar[d]^{\comma\Space(\phi)}\\
H'\ar[rr]_{\xi_{H'}}   &   &   \comma\Space\left(H'\right)\\
}\]
Then,
\[\begin{array}{rcl}
Q\left(\comma\Space(\phi)\circ\xi_{H}\right)
=Q(\phi)\circ id_{Q(H)}
=Q(\phi)
=id_{Q\left(H'\right)}\circ Q(\phi)
=Q\left(\xi_{H'}\circ\phi\right),
\end{array}\]
so $\xi_{H'}\circ\phi=\comma\Space(\phi)\circ\xi_{H}$ by Proposition \ref{simple:morphisms}.

\end{proof}

If $\cat{C}=\set$,
let $\spa(G)$ be the category of $G$-spaces and $G$-maps,
the functor-structured category \cite[Definition 5.40]{joyofcats}.
The categories $\str(G)$ and $\spa(G)$ coincide by recalling the correspondence between subsets and subobjects in $\set$.

\begin{prop}[Agreement of $\str$ and $\spa$]\label{structured:isomorphism}
If $\xymatrix{\cat{B}\ar[r]^{G} & \set}$ is a functor,
then $\str(G)\cong\spa(G)$ as categories.
\end{prop}

\begin{proof}

If $(B,U)\in\ob(\str(G))$,
then there is a unique subset $\alpha_{(B,U)}$ of $G(B)$ such that the set-theoretic inclusion $\xymatrix{\alpha_{(B,U)}\ar[r]^{\iota_{(B,U)}} & G(B)}\in\set$ represents the subobject $U$ by \cite[Example 7.81.1]{joyofcats}.
If $\xymatrix{\left(B_{1},U_{1}\right)\ar[r]^{b} & \left(B_{2},U_{2}\right)}\in\str(G)$,
there is a unique $\xymatrix{\alpha_{\left(B_{1},U_{1}\right)}\ar[r]^{f} & \alpha_{\left(B_{2},U_{2}\right)}}\in\set$ such that $\iota_{\left(B_{2},U_{2}\right)}\circ f=G(b)\circ\iota_{\left(B_{1},U_{1}\right)}$ by Proposition \ref{structured:uniqueness}.
For $x\in\alpha_{\left(B_{1},U_{1}\right)}$,
one has
\[
G(b)(x)
=\left(G(b)\circ\iota_{\left(B_{1},U_{1}\right)}\right)(x)
=\left(\iota_{\left(B_{2},U_{2}\right)}\circ f\right)(x)
=f(x)
\in\alpha_{\left(B_{2},U_{2}\right)}.
\]
Hence,
$b$ is a $G$-map.

Conversely,
if $(B,\alpha)\in\ob(\spa(G))$,
let $U_{(B,\alpha)}$ be the subobject of $G(B)$ represented by the set-theoretic inclusion $\xymatrix{\alpha\ar[r]^{\kappa_{(B,\alpha)}} & G(B)}\in\set$.
If
\[
\xymatrix{\left(B_{1},\alpha_{1}\right)\ar[r]^{b} & \left(B_{2},\alpha_{2}\right)}\in\spa(G),
\]
define $f:\alpha_{1}\to\alpha_{2}$ by $f(x):=G(b)(x)$.
For $x\in\alpha_{1}$,
\[
\left(\kappa_{\left(B_{2},\alpha_{2}\right)}\circ f\right)(x)
=f(x)
=G(b)(x)
=\left(G(b)\circ\kappa_{\left(B_{2},\alpha_{2}\right)}\right)(x).
\]
Hence,
$b$ is a $G$-structured morphism.

Define the functors $\xymatrix{\str(G)\ar[r]^{Z} & \spa(G)\ar[r]^{W} & \str(G)}$ by
\begin{itemize}
\item $Z(B,U):=\left(B,\alpha_{(B,U)}\right)$, $Z(b):=b$,
\item $W(B,\alpha):=\left(B,U_{(B,\alpha)}\right)$, $W(b):=b$.
\end{itemize}
Immediate calculations show that $WZ=id_{\str(G)}$ and $ZW=id_{\spa(G)}$.

\end{proof}

\section{Examples of Simplification}\label{examples1}

This section considers concrete examples of the simplification of Section \ref{simple} and the subobject-structured categories of Section \ref{structured}.
Sections \ref{examples:ssys}, \ref{examples:digra}, and \ref{examples:istr} consider the motivating examples from graph theory:  set-system hypergraphs, quivers, and incidence hypergraphs, respectively.
In the cases of set-system hypergraphs and quivers,
the simplification process is precisely the classical notion of graph simplification.
For an incidence hypergraph,
the simplification results in an incidence structure by collapsing parallel incidences.
Section \ref{examples:slice} handles the case of a slice category of a regular category,
where the simplification yields the range of the morphism.

\subsection{Set-System Hypergraphs}\label{examples:ssys}

Let $\mathcal{P}:\set\to\set$ be the covariant power-set functor \cite[p.\ 13]{maclane}.
The category $\cat{H}:=\left(id_{\set}\downarrow\mathcal{P}\right)$ is the category of set-system hypergraphs as seen in \cite{dorfler1980,grilliette2023}.
For a set-system hypergraph $H$,
$V(H)$ and $E(H)$ are the vertex set and edge set, respectively,
and $\epsilon_{H}$ is the incidence function.
Let $\cat{SH}$ be the full subcategory of $\cat{H}$ consisting of simple objects,
and $\xymatrix{\cat{SH}\ar[r]^{N_{\cat{H}}} & \cat{H}}$ be the inclusion functor.
As $\cat{H}$ is regular and $V$ admits a right adjoint \cite[p.\ 7, 10]{grilliette2023},
Corollary \ref{simple:reflective} provides a left adjoint $S_{\cat{H}}$ to $N_{\cat{H}}$.

Let $\ssys:=\spa\left(\mathcal{P}\right)$,
which is the category of set systems with their homomorphisms from \cite[p.\ 53]{hell1979}.
Theorem \ref{structured:equivalence} gives the equivalence below.
\[\xymatrix{
\cat{SH}
\ar[rr]^(0.4){\Space_{\cat{H}}}
& &
\str\left(\mathcal{P}\right)
\ar[rr]^(0.6){\comma_{\cat{H}}}
& &
\cat{SH}
}\]
If $\xymatrix{\str\left(\mathcal{P}\right)\ar[rr]^{Z_{\ssys}} & & \ssys}$ is the isomorphism from Proposition \ref{structured:isomorphism},
then the following diagram results.

\[\xymatrix{
\ssys  &   &
\cat{H}\ar@/_/[d]_{S_{\cat{H}}}^{\dashv}\\
\str\left(\mathcal{P}\right)\ar@/^/[rr]^{\comma_{\cat{H}}}\ar[u]^{Z_{\ssys}}_{\cong}   &   &
\cat{SH}\ar@/_/[u]_{N_{\cat{H}}}\ar@/^/[ll]^{\Space_{\cat{H}}}_{\simeq}
}\]

From direct computation,
the through-map is explicitly the simplification of a set-system hypergraph $H$ to a mere set system:
\[
Z_{\ssys}\Space_{\cat{H}}S_{\cat{H}}(H)
=\left(
V(H),
\left\{
\epsilon_{H}(e)
:
e\in E(H)
\right\}
\right).
\]

Moreover,
the following result is an immediate consequence of the composition.

\begin{thm}[Set systems \& hypergraphs]
The category $\ssys$ is equivalent to the reflective subcategory of simple set-system hypergraphs in $\cat{H}$.
Moreover,
$\ssys$ is complete and cocomplete with limits performed by passing to $\cat{H}$ and then applying the simplification.
\end{thm}

\subsection{Quivers}\label{examples:digra}

Let $\cat{E}$ be the finite category drawn below.
\[\xymatrix{
1\ar@/^/[rr]^{s}\ar@/_/[rr]_{t}   &   &   0
}\]
The category $\cat{Q}=\set^{\cat{E}}$ is the category of directed multigraphs,
or quivers,
as seen in \cite[p.\ 2]{bumby1984}.
Let $\Delta:\set\to\set\times\set$ be the diagonal functor,
which admits a right adjoint $\Delta^{\star}:\set\times\set\to\set$ given by the cartesian product \cite[p.\ 85]{maclane}.
The category $\cat{Q}_{1}:=\left(id_{\set}\downarrow\Delta^{\star}\Delta\right)$ is the category of quivers as seen in \cite{bondy-murty,knauer}.
The isomorphism between the functor category $\cat{Q}$ and the comma category $\cat{Q}_{1}$ follows from the universal property of the product.
\[\xymatrix{
&   \vec{E}(Q)\ar[dl]_{\sigma_{Q}}\ar[dr]^{\tau_{Q}}\ar@{..>}[d]|-{\exists!\vec{\epsilon}_{Q}}\\
\vec{V}(Q)  &   \vec{V}(Q)\times\vec{V}(Q)\ar[l]^(0.6){\pi_{1}^{Q}}\ar[r]_(0.6){\pi_{2}^{Q}}  &   \vec{V}(Q)
}\]
In the diagram above for a quiver $Q$,
$\vec{V}(Q)$ and $\vec{E}(Q)$ are the vertex and edge sets, respectively,
which occur in both the functor category representation and comma category representation.
Also,
$\sigma_{Q}$ and $\tau_{Q}$ are the source and target maps, respectively, from the functor category representation,
and $\vec{\epsilon}_{Q}$ is the incidence function from the comma category representation.
To tie these representations together,
$\pi_{n}^{Q}$ is the coordinate projection for $n=1,2$.

\begin{defn}[Isomorphism]
Define $\xymatrix{\cat{Q}\ar[r]^{W_{\cat{Q}}} & \cat{Q}_{1}\ar[r]^{Z_{\cat{Q}}} & \cat{Q}}$ by
\begin{itemize}

\item $W_{\cat{Q}}(Q):=\left(\vec{E}(Q),\vec{\epsilon}_{Q},\vec{V}(Q)\right)$,
where $\vec{\epsilon}_{Q}(e):=\left(\sigma_{Q}(e),\tau_{Q}(e)\right)$;

\item $W_{\cat{Q}}(\phi):=\left(\vec{E}(\phi),\vec{V}(\phi)\right)$;

\item $Z_{\cat{Q}}(Q):=\left(\vec{V}(Q),\vec{E}(Q),\pi_{1}^{Q}\circ\vec{\epsilon}_{Q},\pi_{2}^{Q}\circ\vec{\epsilon}_{Q}\right)$;

\item $Z_{\cat{Q}}(\phi):=\left(\vec{V}(\phi),\vec{E}(\phi)\right)$.

\end{itemize}
Routine calculations show that $W_{\cat{Q}}Z_{\cat{Q}}=id_{\cat{Q}_{1}}$ and $Z_{\cat{Q}}W_{\cat{Q}}=id_{\cat{Q}}$.
\end{defn}

Let $\cat{SQ}$ be the full subcategory of $\cat{Q}_{1}$ consisting of simple objects,
and
\[\xymatrix{
\cat{SQ}
\ar[rr]^{N_{\cat{Q}}}
& &
\cat{Q}_{1}
}\]
be the inclusion functor.
As $\cat{Q}$ is a topos and $\vec{V}$ admits a right adjoint \cite[p.\ 5]{grilliette2023},
Corollary \ref{simple:reflective} provides a left adjoint $S_{\cat{Q}}$ to $N_{\cat{Q}}$.

Let $\digra:=\spa\left(\Delta^{\star}\Delta\right)$,
which is the category of digraphs, or relations, from \cite{joyofcats,hell1979-2}.
Theorem \ref{structured:equivalence} provides the equivalence $\xymatrix{\cat{SQ}\ar[r]^(0.4){\Space_{\cat{Q}}} & \str\left(\Delta^{\star}\Delta\right)\ar[r]^(0.6){\comma_{\cat{Q}}} & \cat{SQ}}$.
If $\xymatrix{\str\left(\Delta^{\star}\Delta\right)\ar[rr]^{Z_{\digra}} & & \digra}$ is the isomorphism from Proposition \ref{structured:isomorphism}
then the following diagram results.

\[\xymatrix{
\digra  &   &   &   &
\cat{Q}\ar[d]^{W_{\cat{Q}}}_{\cong}\\
\str\left(\Delta^{\star}\Delta\right)\ar@/^/[rr]^(0.55){\comma_{\cat{Q}}}\ar[u]^{Z_{\digra}}_{\cong}   &   &
\cat{SQ}\ar@/^/[rr]^{N_{\cat{Q}}}\ar@/^/[ll]^(0.45){\Space_{\cat{Q}}}_{\simeq}    &   &
\cat{Q}_{1}\ar@/^/[ll]^{S_{\cat{Q}}}_{\top}
}\]

From direct computation,
the through-map is explicitly the simplification of a quiver $Q$ to a mere digraph:
\[
Z_{\digra}\Space_{\cat{Q}}S_{\cat{Q}}W_{\cat{Q}}(Q)
=\left(
\vec{V}(Q),
\left\{
\left(\sigma_{Q}(e),\tau_{Q}(e)\right)
:
e\in\vec{E}(Q)
\right\}
\right).
\]

Moreover,
the following result is an immediate consequence of the composition.

\begin{thm}[Digraphs \& quivers]
The category $\digra$ is equivalent to the reflective subcategory of simple quivers in $\cat{Q}$.
Moreover,
$\digra$ is complete and cocomplete with limits performed by passing to $\cat{Q}$ and then applying the simplification.
\end{thm}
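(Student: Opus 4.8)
The plan is to read the theorem off the chain of functors displayed just above its statement, by composing three facts: the equivalence $\Digra\simeq\cat{SQ}_{1}$ furnished by Theorem \ref{spaces-in-commas}, the reflective adjunction $\mathcal{S}_{\cat{Q}_{1}}\dashv\mathcal{N}_{\cat{Q}_{1}}$ exhibiting $\cat{SQ}_{1}$ as a reflective subcategory of $\cat{Q}_{1}$ from Corollary \ref{reflective}, and the isomorphism $Z_{\cat{Q}}:\cat{Q}_{1}\to\cat{Q}$ with inverse $W_{\cat{Q}}$. First I would transport the reflective structure across the isomorphism: since $Z_{\cat{Q}}$ and $W_{\cat{Q}}$ are mutually inverse and hence preserve all categorical data, the essential image $Z_{\cat{Q}}\left(\cat{SQ}_{1}\right)$ is a full, replete subcategory of $\cat{Q}$---the \emph{simple quivers}, those $Q$ whose incidence $\vec{\epsilon}_{Q}:e\mapsto\left(\sigma_{Q}(e),\tau_{Q}(e)\right)$ is injective---and it is reflective in $\cat{Q}$ with reflector $Z_{\cat{Q}}\circ\mathcal{S}_{\cat{Q}_{1}}\circ W_{\cat{Q}}$. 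Composing with the equivalence $\Comma_{\cat{Q}}:\Digra\to\cat{SQ}_{1}$ then exhibits $\Digra$ as equivalent to this reflective subcategory of simple quivers, giving the first assertion.

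For completeness and cocompleteness I would start at the bottom of the chain. The category $\cat{Q}=\Set^{\cat{E}}$ is a functor category valued in $\Set$, so it is both complete and cocomplete with limits and colimits computed pointwise; transporting along $W_{\cat{Q}}$, the same holds for $\cat{Q}_{1}$. Now I invoke the standard behaviour of a reflective subcategory inside a (co)complete ambient category: the inclusion $\mathcal{N}_{\cat{Q}_{1}}$, being a right adjoint, preserves limits and $\cat{SQ}_{1}$ is closed under them, so $\cat{SQ}_{1}$ is complete with limits agreeing with those computed in $\cat{Q}_{1}$; and $\cat{SQ}_{1}$ is cocomplete, a colimit of a diagram $D$ being obtained as $\mathcal{S}_{\cat{Q}_{1}}$ applied to the colimit of $\mathcal{N}_{\cat{Q}_{1}}D$ taken in $\cat{Q}_{1}$. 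Finally, since completeness and cocompleteness are preserved by equivalence, the equivalence $\Digra\simeq\cat{SQ}_{1}$ pushes both properties onto $\Digra$.

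It remains to describe the construction concretely, making precise the phrase ``passing to $\cat{Q}$ and then applying the simplification.'' For any small diagram of simple quivers one forms the pointwise (co)limit in $\cat{Q}$---equivalently in $\cat{Q}_{1}$ via $W_{\cat{Q}}$---and then applies $\mathcal{S}_{\cat{Q}_{1}}$, whose object action was recorded above (replacing the incidence by the inclusion of its set-theoretic range). For a limit this final step is an isomorphism: any pointwise product, equalizer, or more generally limit of injective incidence functions is again injective, so the ambient limit is already simple, consistent with $\cat{SQ}_{1}$ being closed under limits. The step becomes genuinely necessary for colimits, where an identification of vertices (say in a pushout) can force distinct edges to share endpoints, and $\mathcal{S}_{\cat{Q}_{1}}$ is exactly what collapses the resulting parallel edges.

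I expect the only real obstacle to be bookkeeping rather than substance: checking that the reflective adjunction $\mathcal{S}_{\cat{Q}_{1}}\dashv\mathcal{N}_{\cat{Q}_{1}}$ transports cleanly across the isomorphism $\cat{Q}_{1}\cong\cat{Q}$ and across the equivalence $\cat{SQ}_{1}\simeq\Digra$, so that the three links of the displayed chain compose into a single reflective embedding of $\Digra$, up to equivalence, into $\cat{Q}$; together with a careful citation of the standard reflective-subcategory facts, namely the inclusion preserving limits with $\cat{SQ}_{1}$ closed under them, and colimits obtained by reflecting the ambient colimit. Beyond what Theorem \ref{spaces-in-commas} and Corollary \ref{reflective} already provide, no new construction is needed.
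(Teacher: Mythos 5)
Your proposal is correct and follows exactly the route the paper intends: the displayed chain $\Digra\simeq\cat{SQ}_{1}$ (Theorem \ref{spaces-in-commas}), $\mathcal{S}_{\cat{Q}_{1}}\dashv\mathcal{N}_{\cat{Q}_{1}}$ (Corollary \ref{reflective}), and $\cat{Q}_{1}\cong\cat{Q}$, combined with the standard facts that functor categories into $\Set$ are complete and cocomplete and that reflective subcategories inherit both properties. The paper states this as an ``immediate consequence'' of that chain, so your write-up is simply a careful elaboration of the same argument, including the correct observation that the simplification step is an isomorphism on limits and only does real work on colimits.
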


\subsection{Incidence Hypergraphs}\label{examples:istr}

Let $\cat{D}$ be the finite category drawn below.
\[\xymatrix{
0   &   &   2\ar[ll]_{y}\ar[rr]^{z}   &   &   1
}\]
The category $\cat{R}=\set^{\cat{D}}$ is the category of incidence hypergraphs from \cite{grilliette2022,grilliette2023}.
The category $\cat{R}_{1}:=\left(id_{\set}\downarrow\Delta^{\star}\right)$ is the category of incidence hypergraphs as seen in \cite{chen2018,rusnak2018}.
The isomorphism between the functor category $\cat{R}$ and the comma category $\cat{R}_{1}$ follows from the universal property of the product.
\[\xymatrix{
&   I(G)\ar[dl]_{\varsigma_{G}}\ar[dr]^{\omega_{G}}\ar@{..>}[d]|-{\exists!\iota_{G}}\\
\check{V}(G)  &   \check{V}(G)\times\check{E}(G)\ar[l]^(0.6){\pi_{1}^{G}}\ar[r]_(0.6){\pi_{2}^{G}}  &   \check{E}(G)
}\]
In the diagram above for an incidence hypergraph $G$,
$\check{V}(G)$, $\check{E}(G)$, and $I(G)$ are the vertex, edge, and incidence sets, respectively,
which occur in both the functor category representation and comma category representation.
Also,
$\varsigma_{G}$ and $\omega_{G}$ are the port and attachment maps, respectively, from the functor category representation,
and $\iota_{G}$ is the incidence function from the comma category representation.
To tie these representations together,
$\pi_{n}^{G}$ is the coordinate projection for $n=1,2$.

\begin{defn}[Isomorphism]
Define $\xymatrix{\cat{R}\ar[r]^{W_{\cat{R}}} & \cat{R}_{1}\ar[r]^{Z_{\cat{R}}} & \cat{R}}$ by
\begin{itemize}

\item $W_{\cat{R}}(G):=\left(I(G),\iota_{G},\left(\check{V}(G),\check{E}(G)\right)\right)$,
where $\iota_{G}(j):=\left(\varsigma_{G}(j),\omega_{G}(j)\right)$;

\item $W_{\cat{R}}(\phi):=\left(I(\phi),\left(\check{V}(\phi),\check{E}(\phi)\right)\right)$;

\item $Z_{\cat{R}}(G):=\left(\check{V}(G),\check{E}(G),I(G),\pi_{1}^{G}\circ\iota_{G},\pi_{2}^{G}\circ\iota_{G}\right)$;

\item $Z_{\cat{R}}(\phi):=\left(\check{V}(\phi),\check{E}(\phi),I(\phi)\right)$.

\end{itemize}
Routine calculations show that $W_{\cat{R}}Z_{\cat{R}}=id_{\cat{R}_{1}}$ and $Z_{\cat{R}}W_{\cat{R}}=id_{\cat{R}}$.
\end{defn}

Let $\xymatrix{\cat{R}_{1}\ar[r]^(0.4){R} & \set\times\set}$ be the codomain functor,
which returns an ordered pair consisting of the vertex set and edge set.
Let $\cat{SR}$ be the full subcategory of $\cat{R}_{1}$ consisting of simple objects,
and $\xymatrix{\cat{SR}\ar[r]^{N_{\cat{R}}} & \cat{R}_{1}}$ be the inclusion functor.
As $\cat{R}$ is a topos and $R$ admits a right adjoint \cite[Proposition 1.1.2]{grilliette2023},
Corollary \ref{simple:reflective} provides a left adjoint $S_{\cat{R}}$ to $N_{\cat{R}}$.

Let $\istr:=\spa\left(\Delta^{\star}\right)$,
which is the category of incidence structures from \cite{beth,bumby1984,dembowski}.
Theorem \ref{structured:equivalence} yields the equivalence $\xymatrix{\cat{SR}\ar[r]^(0.4){\Space_{\cat{R}}} & \str\left(\Delta^{\star}\right)\ar[r]^(0.6){\comma_{\cat{R}}} & \cat{SR}}$.
If
\[\xymatrix{
\str\left(\Delta^{\star}\right)
\ar[rr]^{Z_{\istr}}
&
&
\istr
}\]
is the isomorphism from Proposition \ref{structured:isomorphism},
then the following diagram results.
\[\xymatrix{
\istr  &   &   &   &
\cat{R}\ar[d]^{W_{\cat{R}}}_{\cong}\\
\str\left(\Delta^{\star}\right)\ar@/^/[rr]^(0.55){\comma_{\cat{R}}}\ar[u]^{Z_{\istr}}_{\cong}   &   &
\cat{SR}\ar@/^/[rr]^{N_{\cat{R}}}\ar@/^/[ll]^(0.45){\Space_{\cat{R}}}_{\simeq}    &   &
\cat{R}_{1}\ar@/^/[ll]^{S_{\cat{R}}}_{\top}
}\]
From direct computation,
the through-map is explicitly the simplification of an incidence hypergraph $G$ to a mere incidence structure:
\[
Z_{\istr}\Space_{\cat{R}}S_{\cat{R}}W_{\cat{R}}(G)
=\left(
\left(\check{V}(G),\check{E}(G)\right),
\left\{
\left(\varsigma_{G}(j),\omega_{G}(j)\right)
:
j\in I(G)
\right\}
\right).
\]

Moreover,
the following result is an immediate consequence of the composition.

\begin{thm}[Incidence structures \& hypergraphs]
The category $\istr$ is equivalent to the reflective subcategory of simple incidence hypergraphs in $\cat{R}$.
Moreover,
$\istr$ is complete and cocomplete with limits performed by passing to $\cat{R}$ and then applying the simplification.
\end{thm}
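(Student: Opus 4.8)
The plan is to read the theorem off the chain of functors displayed just above its statement, treating each arrow as a black box supplied by an earlier result. For the first assertion, note that $\IStr=\Space\left(\Delta^{\star}\right)$ and $\cat{R}_{1}=\left(id_{\Set}\downarrow\Delta^{\star}\right)$, so Theorem \ref{spaces-in-commas}, applied with $G=\Delta^{\star}$, already asserts that $\IStr$ is equivalent to $\cat{SR}_{1}$, the reflective subcategory of simple objects within $\cat{R}_{1}$. It then remains only to move this picture from $\cat{R}_{1}$ to $\cat{R}$. Since $Z_{\cat{R}}:\cat{R}_{1}\to\cat{R}$ is an isomorphism of categories, it carries $\cat{SR}_{1}$ onto a full, replete subcategory $\cat{SR}:=Z_{\cat{R}}\left(\cat{SR}_{1}\right)\subseteq\cat{R}$ — the simple incidence hypergraphs, i.e.\ those $G$ for which $\iota_{G}$ is monic — and transports the reflection, so that $Z_{\cat{R}}\mathcal{S}_{\cat{R}_{1}}W_{\cat{R}}$ is left adjoint to the inclusion $\cat{SR}\hookrightarrow\cat{R}$. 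Composing, $\IStr\simeq\cat{SR}_{1}\cong\cat{SR}$, a reflective subcategory of $\cat{R}$, which is the first claim.

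For the second assertion I would work up the same chain. The functor category $\cat{R}=\Set^{\cat{D}}$ is complete and cocomplete because $\Set$ is and (co)limits in a functor category are formed pointwise; transporting along the isomorphism $Z_{\cat{R}}$ shows $\cat{R}_{1}$ is complete and cocomplete as well. I would then invoke the standard fact that a full reflective subcategory of a complete and cocomplete category is again such: $\cat{SR}_{1}$ is closed under limits in $\cat{R}_{1}$, so the inclusion $\mathcal{N}_{\cat{R}_{1}}$ creates its limits, while colimits in $\cat{SR}_{1}$ are obtained by forming the colimit in $\cat{R}_{1}$ and applying the cocontinuous reflector $\mathcal{S}_{\cat{R}_{1}}$. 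Hence $\cat{SR}_{1}$ is complete and cocomplete, and because $\IStr\simeq\cat{SR}_{1}$ and equivalences both preserve and reflect the existence of limits and colimits, so is $\IStr$. Unwinding the equivalence of Theorem \ref{spaces-in-commas} and the isomorphism $Z_{\cat{R}}$ rewrites ``limit in $\cat{SR}_{1}$'' as ``limit in $\cat{R}$,'' which is the asserted recipe of passing to $\cat{R}$ to compute.

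The one delicate point — and the place I expect friction — is the phrase ``applying the simplification.'' Strictly, in a reflective subcategory it is the \emph{colimits}, not the limits, that are genuinely produced by applying the reflector after computing in the ambient category; limits already land in the subcategory. The stated recipe is nevertheless literally correct, because the reflector is idempotent and $\cat{SR}$ is closed under limits: computing a limit in $\cat{R}$ and then simplifying returns the same object up to isomorphism, since the simplification acts as an isomorphism on an already-simple object (its reflection unit $\eta$ is invertible there, by the earlier lemma on simplifying simple objects). I would therefore word the final sentence so that it records that the simplification is the essential step for colimits and a harmless idempotent pass for limits, keeping the ``pass to $\cat{R}$, then simplify'' recipe uniformly valid.
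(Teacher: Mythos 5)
Your proposal is correct and takes essentially the same route as the paper, which offers no separate proof but presents the theorem as an ``immediate consequence'' of exactly the chain you assemble: $\IStr\simeq\cat{SR}_{1}$ by Theorem \ref{spaces-in-commas} applied to $G=\Delta^{\star}$, $\cat{SR}_{1}$ reflective in $\cat{R}_{1}$ by Corollary \ref{reflective}, the isomorphism $W_{\cat{R}},Z_{\cat{R}}$ between $\cat{R}_{1}$ and $\cat{R}=\Set^{\cat{D}}$, and the standard facts that a full reflective subcategory of a complete and cocomplete category is complete and cocomplete and that equivalences transport this. Your closing caveat---that the reflector is genuinely needed only for colimits, while for limits the ``pass to $\cat{R}$, then simplify'' recipe is a harmless idempotent pass---is a fair and correct gloss on the paper's loose wording (note the introduction uses ``limit processes'' to cover both limits and colimits).
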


\subsection{Slice Category}\label{examples:slice}

Fix a regular category $\cat{C}$ and an object $Y\in\ob(\cat{C})$.
Let $\cat{1}$ be the discrete category with a single object $\ast$,
and let $\xymatrix{\cat{1}\ar[r]^{K} & \cat{C}}$ be the constant functor determined by the map $\ast\mapsto Y$ \cite[Exercise 3K.b]{joyofcats}.
The slice category $\cat{C}/Y$ is isomorphic to the comma category $\cat{G}:=\left(id_{\cat{C}}\downarrow K\right)$ by the isomorphism below.

\begin{defn}[{Isomorphism, \cite[p.\ 92]{borceux1}}]
Define $\xymatrix{\cat{G}\ar[r]^{Z_{\cat{G}}} & \cat{C}/Y\ar[r]^{W_{\cat{G}}} & \cat{G}}$ by
\begin{itemize}

\item $Z_{\cat{G}}(C,f,\ast):=(C,f)$, $W_{\cat{G}}(C,f):=(C,f,\ast)$,

\item $Z_{\cat{G}}\left(\phi,id_{\ast}\right):=\phi$, $W_{\cat{G}}(\phi):=\left(\phi,id_{\ast}\right)$.

\end{itemize}
Routine calculations show that $W_{\cat{G}}Z_{\cat{G}}=id_{\cat{G}}$ and $Z_{\cat{G}}W_{\cat{G}}=id_{\cat{C}/Y}$.
\end{defn}

Let $\cat{SG}$ be the full subcategory of $\cat{G}$ consisting of simple objects,
and
\[\xymatrix{
\cat{SG}
\ar[rr]^{N_{\cat{G}}} 
& &
\cat{G}
}\]
be the inclusion functor.
As $\cat{1}$ is terminal,
the codomain functor of $\cat{G}$ is merely the constant functor,
whose right adjoint is the constant functor to the terminal object $\left(Y,id_{Y}\right)$ of the category \cite[p.\ 88]{maclane}.
Since $\cat{C}$ is regular,
$\cat{C}/Y$ is also \cite[Example 2.4.9]{borceux2}.
By Corollary \ref{simple:reflective},
$N_{\cat{G}}$ admits a left adjoint $S_{\cat{G}}$.

Via the isomorphism $W_{\cat{G}}$,
$\cat{SG}$ is isomorphic to the full subcategory $\mathbf{Mono}_{\cat{C}}(Y)$ of $\cat{C}/Y$ of monomorphisms \cite[p.\ 133]{borceux1}.
Observe that $\str(K)$ is isomorphic to the ordered class $\mathbf{Sub}_{\cat{C}}(Y)$ of subobjects of $Y$,
considered as a category,
through the isomorphism below.

\begin{defn}[Isomorphism, part 2]
Define the functors
\[\xymatrix{
\str(K)
\ar[rr]^{Z_{\cat{G}}'}
& &
\mathbf{Sub}_{\cat{C}}(Y)
\ar[rr]^{W_{\cat{G}}'}
& &
\str(K)
}\]
in the following way:
\begin{itemize}

\item $Z_{\cat{G}}'(\ast,U):=U$, $W_{\cat{G}}'(U):=(\ast,U)$,

\item $Z_{\cat{G}}'\left(id_{\ast}\right):=\phi$, $W_{\cat{G}}'(\phi):=id_{\ast}$,
where $\phi$ is the inclusion of the subobject $U$ into $U'$.

\end{itemize}
Routine calculations show that $W_{\cat{G}}'Z_{\cat{G}}'=id_{\str(K)}$ and $Z_{\cat{G}}'W_{\cat{G}}'=id_{\mathbf{Sub}_{\cat{C}}(Y)}$.
\end{defn}

If $\xymatrix{\cat{SG}\ar[r]^(0.4){\Space_{\cat{G}}} & \str\left(K\right)\ar[r]^(0.6){\comma_{\cat{G}}} & \cat{SG}}$ is the equivalence from Theorem \ref{structured:equivalence},
then the following diagram results.
\[\xymatrix{
\mathbf{Sub}_{\cat{C}}(Y)  &   &    \mathbf{Mono}_{\cat{C}}(Y)\ar[d]|-{\cong}   &   &
\cat{C}/Y\ar[d]^{W_{\cat{G}}}_{\cong}\\
\str\left(K\right)\ar@/^/[rr]^(0.55){\comma_{\cat{G}}}\ar[u]^{Z_{\cat{G}}'}_{\cong}   &   &
\cat{SG}\ar@/^/[rr]^{N_{\cat{G}}}\ar@/^/[ll]^(0.45){\Space_{\cat{G}}}_{\simeq}    &   &
\cat{G}\ar@/^/[ll]^{S_{\cat{G}}}_{\top}
}\]
From direct computation,
the through-map yields precisely the image of the morphism:
\[\begin{array}{rcl}
Z_{\cat{G}}'\Space_{\cat{G}}S_{\cat{G}}W_{\cat{G}}(C,f)
&   =   &   Z_{\cat{G}}'\Space_{\cat{G}}S_{\cat{G}}(C,f,\ast)\\
&   =   &   Z_{\cat{G}}'\Space_{\cat{G}}(R,m,\ast)\\
&   =   &   Z_{\cat{G}}'(\ast,U)\\
&   =   &   U,
\end{array}\]
where $\xymatrix{C\ar@{->>}[r]^{e} & R\textrm{ }\ar@{>->}[r]^{m} & Y}\in\cat{C}$ is an image factorization of $f$,
and $U$ is the subobject of $Y$ represented by $(R,m)$.
Moreover,
the following result is an immediate consequence of the composition.

\begin{thm}[Subobjects in a regular category]
Let $Y$ be a fixed object in a regular category $\cat{C}$.
The ordered class $\mathbf{Sub}_{\cat{C}}(Y)$ is equivalent, as a category, to the reflective subcategory of monomorphisms in $\cat{C}/Y$.
\end{thm}

\section{Examples of Antisimplification}\label{examples2}

This section considers concrete examples of ``antisimplification'',
the categorical dual notion to simplification.
Indeed,
notice that the simplification process presented in Section \ref{simple} can be dualized:
a left adjoint to the domain functor,
an epi-regular mono factorization of the counit.
One could also dualize the subobject-structured categories from Section \ref{structured} to provide ``quotient-structured categories''.
Here,
antisimplification is applied to the cases from Section \ref{examples1}.

Sadly,
antisimplification cannot be applied to the set-system hypergraphs of Section \ref{examples:ssys}
since the edge/domain functor $E$ does not admit a left adjoint \cite[Lemma 2.2.17]{grilliette2023}.
On the other hand,
the antisimplification applied to a coslice category of a coregular category will be precisely the dual of the case in Section \ref{examples:slice}.
Hence,
these two cases will be omitted.

The cases for quivers and incidence hypergraphs, however, are more interesting.
Section \ref{examples:r2} shows that the antisimplification of an incidence hypergraph,
rather than removing parallel incidences as in Section \ref{examples:istr},
instead removes isolated vertices and loose edges.
Likewise,
Section \ref{examples:q2} exhibits antisimplification of a quiver as removing isolated vertices,
instead of removing parallel edges as in Section \ref{examples:digra}.
These two characterizations seem to imply some duality between parallelism and isolation within a graph.

\subsection{Incidence Hypergraphs}\label{examples:r2}

Through the isomorphism $\xymatrix{\cat{R}\ar@/^/[r]^{W_{\cat{R}}} & \cat{R}_{1}\ar@/^/[l]^{Z_{\cat{R}}}}$,
the incidence functor $\xymatrix{\cat{R}\ar[r]^{I} & \set}$ corresponds to the domain functor of the comma category $\cat{R}_{1}$.
By \cite[p.\ 21]{grilliette2023},
$I$ admits a left adjoint $I^{\diamond}$ given on objects by
\[
I^{\diamond}(X)=\left(X,X,X,id_{X},id_{X}\right)
\]
with unit $\xymatrix{id_{\set}\ar[r]^{\eta} & II^{\diamond}}$ determined by $\eta_{X}=id_{X}$.
The counit codifies the structure of an incidence hypergraph into a homomorphism.

\begin{lem}[Counit]
The counit $\xymatrix{I^{\diamond}I\ar[r]^{\theta} & id_{\cat{R}}}$ is given by
\[
\theta_{G}
=
\left(\varsigma_{G},\omega_{G},id_{I(G)}\right).
\]
\end{lem}

\begin{proof}

Calculating,
\begin{itemize}

\item $
I\left(\theta_{G}\right)
=I\left(\theta_{G}\right)\circ id_{I(G)}
=I\left(\theta_{G}\right)\circ\eta_{I(G)}
=id_{I(G)}
$,

\item $
\check{V}\left(\theta_{G}\right)
=\check{V}\left(\theta_{G}\right)\circ id_{I(G)}
=\check{V}\left(\theta_{G}\right)\circ\varsigma_{I^{\diamond}I(G)}
=\varsigma_{G}\circ I\left(\theta_{G}\right)
=\varsigma_{G}\circ id_{I(G)}
=\varsigma_{G}
$,

\item $
\check{E}\left(\theta_{G}\right)
=\check{E}\left(\theta_{G}\right)\circ id_{I(G)}
=\check{E}\left(\theta_{G}\right)\circ\omega_{I^{\diamond}I(G)}
=\omega_{G}\circ I\left(\theta_{G}\right)
=\omega_{G}\circ id_{I(G)}
=\omega_{G}
$.

\end{itemize}

\end{proof}

Let $\cat{TR}_{1}$ be the full subcategory of $\cat{R}_{1}$ consisting of all antisimple objects,
and $\xymatrix{\cat{TR}_{1}\ar[r]^{M_{\cat{R}}} & \cat{R}_{1}}$ be the inclusion functor.
As $\cat{R}$ is a topos,
Corollary \ref{simple:reflective} provides a right adjoint $T_{\cat{R}}$ to $M_{\cat{R}}$.
Restricting $Z_{\cat{R}}$ and $W_{\cat{R}}$ to $\cat{TR}_{1}$ associates $\cat{TR}_{1}$ isomorphically to the full subcategory $\cat{TR}$ of $\cat{R}$ of incidence hypergraphs $G$,
where $\theta_{G}$ is an epimorphism.
Together,
these functors yield the following diagram.
\[\xymatrix{
\cat{TR}    &   &   \cat{R}\ar[d]^{W_{\cat{R}}}_{\cong}\\
\cat{TR}_{1}\ar@/^/[rr]^{M_{\cat{R}}}\ar[u]^{Z_{\cat{R}}'}_{\cong}    &   &   \cat{R}_{1}\ar@/^/[ll]^{T_{\cat{R}}}_{\bot}
}\]
From direct calculation,
the through-map explicitly removes any isolated vertices or loose edges from an incidence hypergraph $G$.
\[
Z_{\cat{R}}'T_{\cat{R}}W_{\cat{R}}(G)
=\left(
\Ran\left(\varsigma_{G}\right),
\Ran\left(\omega_{G}\right),
I(G),
\left.\varsigma_{G}\right|^{\Ran\left(\varsigma_{G}\right)},
\left.\omega_{G}\right|^{\Ran\left(\omega_{G}\right)}
\right)
\]
Thus,
vertices that are not incident to an edge,
and edges that are not incident to a vertex,
are removed.

\subsection{Quivers}\label{examples:q2}

Via the isomorphism $\xymatrix{\cat{Q}\ar@/^/[r]^{W_{\cat{Q}}} & \cat{Q}_{1}\ar@/^/[l]^{Z_{\cat{Q}}}}$,
the edge functor $\xymatrix{\cat{Q}\ar[r]^{\vec{E}} & \set}$ corresponds to the domain functor of the comma category $\cat{Q}_{1}$.
By \cite[p.\ 5]{grilliette2023},
$\vec{E}$ admits a left adjoint $\vec{E}^{\diamond}$ given on objects by
\[
\vec{E}^{\diamond}(X)=\left(\{0,1\}\times X,X,\varpi_{0},\varpi_{1}\right),
\]
where $\varpi_{n}(e):=(n,e)$ are the canonical inclusions into the disjoint union.
The unit $\xymatrix{id_{\set}\ar[r]^{\eta} & \vec{E}\vec{E}^{\diamond}}$ determined by $\eta_{X}=id_{X}$.
The counit weds source and target functions through the disjoint union into a homomorphism.

\begin{lem}[Counit]
The counit $\xymatrix{\vec{E}^{\diamond}\vec{E}\ar[r]^{\theta} & id_{\cat{Q}}}$ is given by
\[
\theta_{Q}
=
\left(\vec{V}\left(\theta_{Q}\right),id_{\vec{E}(Q)}\right),
\]
where $\vec{V}\left(\theta_{Q}\right)(n,e)=\left\{\begin{array}{cc}
\sigma_{Q}(e),  &   n=0,\\
\tau_{Q}(e),  &   n=1.
\end{array}\right.
$
\end{lem}

\begin{proof}

Calculating,
\begin{itemize}

\item $
\vec{E}\left(\theta_{Q}\right)
=\vec{E}\left(\theta_{Q}\right)\circ id_{\vec{E}(Q)}
=\vec{E}\left(\theta_{Q}\right)\circ\eta_{\vec{E}(Q)}
=id_{\vec{E}(Q)}
$,

\item $
\vec{V}\left(\theta_{Q}\right)(0,e)
=\left(\vec{V}\left(\theta_{Q}\right)\circ\varpi_{0}\right)(e)
=\left(\vec{V}\left(\theta_{Q}\right)\circ\sigma_{\vec{E}^{\diamond}\vec{E}(Q)}\right)(e)\\
=\left(\sigma_{Q}\circ\vec{E}\left(\theta_{Q}\right)\right)(e)
=\left(\sigma_{Q}\circ id_{\vec{E}(Q)}\right)(e)
=\sigma_{Q}(e)
$,

\item $
\vec{V}\left(\theta_{Q}\right)(1,e)
=\left(\vec{V}\left(\theta_{Q}\right)\circ\varpi_{1}\right)(e)
=\left(\vec{V}\left(\theta_{Q}\right)\circ\tau_{\vec{E}^{\diamond}\vec{E}(Q)}\right)(e)\\
=\left(\tau_{Q}\circ\vec{E}\left(\theta_{Q}\right)\right)(e)
=\left(\tau_{Q}\circ id_{\vec{E}(Q)}\right)(e)
=\tau_{Q}(e)
$.

\end{itemize}

\end{proof}

Let $\cat{TQ}_{1}$ be the full subcategory of $\cat{Q}_{1}$ consisting of all antisimple objects,
and $\xymatrix{\cat{TQ}_{1}\ar[r]^{M_{\cat{Q}}} & \cat{Q}_{1}}$ be the inclusion functor.
As $\cat{Q}$ is a topos,
Corollary \ref{simple:reflective} provides a right adjoint $T_{\cat{Q}}$ to $M_{\cat{Q}}$.
Restricting $Z_{\cat{Q}}$ and $W_{\cat{Q}}$ to $\cat{TQ}_{1}$ associates $\cat{TQ}_{1}$ isomorphically to the full subcategory $\cat{TQ}$ of $\cat{Q}$ of quivers $Q$,
where $\theta_{Q}$ is an epimorphism.
Together,
these functors yield the following diagram.
\[\xymatrix{
\cat{TQ}    &   &   \cat{Q}\ar[d]^{W_{\cat{Q}}}_{\cong}\\
\cat{TQ}_{1}\ar@/^/[rr]^{M_{\cat{Q}}}\ar[u]^{Z_{\cat{Q}}'}_{\cong}    &   &   \cat{Q}_{1}\ar@/^/[ll]^{T_{\cat{Q}}}_{\bot}
}\]
From direct calculation,
the through-map explicitly removes any isolated vertices from a quiver $Q$.
\begin{center}\scalebox{0.95}{$
Z_{\cat{Q}}'T_{\cat{Q}}W_{\cat{Q}}(Q)
=\left(
\Ran\left(\sigma_{Q}\right)\cup\Ran\left(\tau_{Q}\right),
\vec{E}(Q),
\left.\sigma_{Q}\right|^{\Ran\left(\sigma_{Q}\right)\cup\Ran\left(\tau_{Q}\right)},
\left.\tau_{Q}\right|^{\Ran\left(\sigma_{Q}\right)\cup\Ran\left(\tau_{Q}\right)}
\right)
$}\end{center}
Thus,
vertices that are not the source or target of an edge are removed.

\bibliographystyle{plain}
\bibliography{main}

\begin{thebibliography}{10}

\bibitem{joyofcats}
Jir{\i} Ad{\'a}mek, Horst Herrlich, and George~E Strecker.
\newblock Abstract and concrete categories; the joy of cats 2004.
\newblock {\em Reprints in Theory and Applications of Categories}, 17:1--507,
  2006.

\bibitem{adamek1980}
Jiří Adámek and Václav Koubek.
\newblock Cartesian closed functor-structured categories.
\newblock {\em Commentationes Mathematicae Universitatis Carolinae},
  021(3):573--590, 1980.

\bibitem{beth}
Thomas Beth, D~Jungnickel, and Hanfried Lenz.
\newblock {\em Encyclopedia of mathematics and its applications design theory:
  Series number 69: Volume 1}.
\newblock Cambridge University Press, Cambridge, England, 2 edition, November
  1999.

\bibitem{bondy-murty}
J.A. Bondy and U.S.R Murty.
\newblock {\em Graph Theory}.
\newblock Springer Publishing Company, Incorporated, 1st edition, 2008.

\bibitem{borceux1}
Francis Borceux.
\newblock {\em Handbook of categorical algebra I: Basic category theory
  (encyclopedia of mathematics and its applications)}.
\newblock Cambridge University Press, 1994.

\bibitem{borceux2}
Francis Borceux.
\newblock {\em Encyclopedia of mathematics and its applications handbook of
  categorical algebra: Series number 51: Categories and structures volume 2}.
\newblock Cambridge University Press, Cambridge, England, April 2008.

\bibitem{bumby1984}
Richard~T. Bumby and Dana~May Latch.
\newblock Categorical constructions in graph theory.
\newblock {\em International Journal of Mathematics and Mathematical Sciences},
  9:791947, Jan 1984.

\bibitem{chen2018}
Gina Chen, Vivian Liu, Ellen Robinson, Lucas~J. Rusnak, and Kyle Wang.
\newblock A characterization of oriented hypergraphic laplacian and adjacency
  matrix coefficients.
\newblock {\em Linear Algebra and its Applications}, 556:323--341, 2018.

\bibitem{dembowski}
Peter Dembowski.
\newblock {\em Finite geometries}.
\newblock Classics in Mathematics. Springer, Berlin, Germany, December 1996.

\bibitem{dorfler1980}
W.~D{\"o}rfler and D.~A. Waller.
\newblock A category-theoretical approach to hypergraphs.
\newblock {\em Archiv der Mathematik}, 34(1):185--192, Dec 1980.

\bibitem{grilliette2022}
Will Grilliette, Josephine Reynes, and Lucas~J. Rusnak.
\newblock Incidence hypergraphs: Injectivity, uniformity, and matrix-tree
  theorems.
\newblock {\em Linear Algebra and its Applications}, 634:77--105, 2022.

\bibitem{grilliette2023}
Will Grilliette and Lucas~J. Rusnak.
\newblock Incidence hypergraphs: the categorical inconsistency of set-systems
  and a characterization of quiver exponentials.
\newblock {\em Journal of Algebraic Combinatorics}, 58(1):1--36, Aug 2023.

\bibitem{hajiabolhassan2016}
Hossein Hajiabolhassan and Frédéric Meunier.
\newblock Hedetniemi's conjecture for kneser hypergraphs.
\newblock {\em Journal of Combinatorial Theory, Series A}, 143:42--55, 2016.

\bibitem{hammack2016}
Richard~H. Hammack, Marc Hellmuth, Lydia Ostermeier, and Peter~F. Stadler.
\newblock Associativity and non-associativity of some hypergraph products.
\newblock {\em Mathematics in Computer Science}, 10(3):403--408, Sep 2016.

\bibitem{hell1979-2}
Pavol Hell.
\newblock An introduction to the category of graphs.
\newblock {\em Annals of the New York Academy of Sciences}, 328(1):120--136,
  1979.

\bibitem{hell1979}
Pavol Hell and Jaroslav Nešetřil.
\newblock Cohomomorphisms of graphs and hypergraphs.
\newblock {\em Mathematische Nachrichten}, 87(1):53--61, 1979.

\bibitem{knauer}
Ulrich Knauer.
\newblock {\em Algebraic graph theory morphisms, monoids, and matrices}.
\newblock De Gruyter, 2011.

\bibitem{kung1998}
Joseph~P.S. Kung.
\newblock A geometric condition for a hyperplane arrangement to be free.
\newblock {\em Advances in Mathematics}, 135(2):303--329, 1998.

\bibitem{kucera1972}
L.~Kučera and A.~Pultr.
\newblock On a mechanism of defining morphisms in concrete categories.
\newblock {\em Cahiers de Topologie et Géométrie Différentielle
  Catégoriques}, 13(4):397--410, 1972.

\bibitem{maclane}
Saunders~Mac Lane.
\newblock {\em Categories for the working mathematician}.
\newblock Graduate Texts in Mathematics. Springer, New York, NY, 2 edition,
  September 1998.

\bibitem{rusnak2018}
Lucas~J. Rusnak, Ellen Robinson, Martin Schmidt, and Piyush Shroff.
\newblock Oriented hypergraphic matrix-tree type theorems and bidirected minors
  via boolean order ideals.
\newblock {\em Journal of Algebraic Combinatorics}, 49(4):461--473, June 2018.

\bibitem{solovyov2013}
Sergey~A. Solovyov.
\newblock Categorically algebraic topology versus universal topology.
\newblock {\em Fuzzy Sets and Systems}, 227:25--45, 2013.
\newblock Theme: Topology.

\end{thebibliography}

\end{document}